\documentclass[a4 paper, 10pt]{article}
\usepackage{hyperref}
\usepackage{amssymb,dsfont,amsthm,amsmath,makeidx,verbatim,latexsym,amsfonts,mathrsfs}
\usepackage{cancel}
\usepackage{graphicx}
\usepackage[none]{hyphenat}

\usepackage[english]{babel}
\addto{\captionsenglish}{%

}
\DeclareMathAlphabet{\mathpzc}{OT1}{pzc}{m}{it}

\begin{document}
\hfuzz5pt
\theoremstyle{plain}
\newtheorem{theorem}{\textbf{Theorem}}[section]
\newtheorem{lemma}[theorem]{\textbf{Lemma}}
\newtheorem{proposition}[theorem]{\textbf{Proposition}}
\newtheorem{corollary}[theorem]{\textbf{Corollary}}
\newtheorem{claim}[theorem]{\textbf{Claim}}
\newtheorem{addendum}[theorem]{\textbf{Addendum}}
\newtheorem{definition}[theorem]{\textbf{Definition}}
\newtheorem{remark}[theorem]{\textbf{Remark}}
\newtheorem{example}[theorem]{\textbf{Example}}
\newtheorem{conjecture}[theorem]{\textbf{Conjecture}}
\newtheorem{notation}[theorem]{\textbf{Notation}}
\renewcommand{\baselinestretch}{1.50} 

	\pagenumbering{arabic} \baselineskip 10pt
\newcommand{\disp}{\displaystyle}
\thispagestyle{empty}

\title{Spherical Functions on Fuzzy Lie group}
\author{M. E. Egwe$^1$ and S.S. Sangodele$^2$\\ Department of Mathematics, University of Ibadan, Ibadan.\\ Email: $^1$\emph{murphy.egwe@ui.edu.ng},\;$^2$\emph{sangodele005@gmail.com}}
\maketitle

\begin{center}\textbf{Abstract}\end{center}
Let $G$ be a locally compact Lie group and $\mathfrak{g}$ its Lie algebra. We consider a fuzzy analogue of $G,$ denoted by $\mathfrak{G_f}$ called a fuzzy Lie group. Spherical functions on $\mathfrak{G_f}$ are constructed and a version of the existence result of the Helgason-spherical function on $G$ is then established on  $\mathfrak{G_f}.$\\
\ \\
\textbf{Key words:} Fuzzy spherical function, Fuzzy Lie group, Fuzzy manifolds\\
\textbf{MSC(2020):} 47H10, 46S40, 54A40, 20N25, 43A90\\
\section{Introduction}
\noindent
\par One of the basic tools for classical computation, modeling and reasoning is crisp, which is exact in nature.  A crisp is dichogamous, indicating Yes - or - No type rather than more - or - less - type. In this case, a membership function, is often used to  assign binary values to each element of the universal $X$.   Fuzzy set theory gives sufficient mathematical configuration in which vague conceptual facts can be precisely and rigorously examined. This has found application in many fields, including, computer science, biomedical engineering, telecommunication, decision making, differential equations, rings, semirings, group, automation and robotics, networking, discrete mathematics, e.t.c.
\par This originated from the novel work of  Zadeh in 1965 \cite{Zadeh} which was introduced to handle the notion of partial truth between "absolute true" and "absolute false".
Fuzzy vectors, fuzzy topological spaces were introduced and exhaustively considered  in \cite{Kastaras1},\cite{Kastaras2} and \cite{Kastaras3}. I. Kubiak \cite{Kubiak} and Sostak \cite{Sostak} considered the key idea of a fuzzy topological structures, as an expansion of both crisp and fuzzy topology. A locally convex property of these topologies has been given in \cite{Egwe2}
The idea of fuzzy topology on fuzzy sets was presented by Chalarabarty \& Ahsanullah \cite{Chakra} as one of the treatments of the issue  which might be known as the subspace issue in fuzzy topological spaces.
The general idea of fuzzy Lie algebras was introduced  by Akram in 2018 \cite{Akram} where fuzzy sets were applied to Lie algebras.
Nadja khah, M., et al.\cite{Nadja} gave the notion of fuzzy sets applications to Lie groups and  concepts relating to them. First, they considered $C^1$-fuzzy manifolds of a fuzzy transformation group and its fuzzy G-invariant property and then gave suitable conditions for defining fuzzy invariant differential operators on the $G.$
Spherical function on general locally compact groups  has been sufficiently studied (see \cite{Godement}, \cite{Chandra}, \cite{Egwe}, \cite{Yeol}).
Helgason, S. (1984), \cite{Helgason}  investigated the structure of the ring $DG(X)$ of $G$-invariant differential operators on a reductive spherical homogeneous space $X/G=H$ with an over group $D_eG$. We shall construct a polynomial algebra $D\mathfrak{G_f}^\mathfrak{K}(X)$ which is  $\mathfrak{G_f}$-invariant differential operators on $X$ with respect to $\mathfrak{K},$  coming from the centers of the enveloping algebra of $\mathscr{U}(G)$ of $\mathfrak{G_f}$ and $\mathfrak{K}$ where $\mathfrak{K}$ is a maximal proper subgroup of $\mathfrak{G_f}.$
\section{Preliminaries}
In this section,, we give some basis definitions that will be needed in the sequel following \cite{Akram}.
\begin{definition}\cite{Akram}
\normalfont
Let $S$ is a nonempty set and  $s\in S.$ A fuzzy set $\mu$ in a universe $S$ is a mapping $\mu:S\rightarrow [0,1]\subset\mathbb{R}.$ A fuzzy subset $\tilde{A}$ in $S$ is a set $\mu_{\tilde{A}}: S\longrightarrow [0,1]\subset\mathbb{R}$ also identified with the graph
$$\tilde{A} = \{(s,\mu_{\tilde{A}}(s))~\bigg|s\in S\},$$ of ordered pairs
where $\mu_{\tilde{A}}(s)$ is called the membership function.
\end{definition}
The fuzzy empty set is denoted by $\bigcirc_S$ and defined as $\bigcirc_S(s) = 0\;\forall\;s\in S$ while the entire set in a set $S$  is denoted $1_S$ and defined  as $1_S(s)= 1$ for all $s\in S.$
The basic operations on fuzzy sets and their standard results can be seen in \cite{Akram} and \cite{Yeol}

\begin{definition}
\normalfont
Let the universe of discourse be $S$ and $U$ a fuzzy set on a $S.$ Given $t\in [0,1],$  we  define a $t$-cut set ($t$-level set) of $U$  as
$$
U_t = \{s\in S| U(s) \geq t\}.
$$
\end{definition}

\subsection{Fuzzy Vector Spaces and Topology of Fuzzy sets}
\begin{definition}
\normalfont
Let $E$ be a vector space and $A_1\cdots A_n$ be fuzzy sets in $E$. We define $A_1\times\cdots \times A_n$ to be the fuzzy set $A$ in $E^n$ whose membership function is given by
$$\mu_A(s_1,\cdots, s_n) = \min\{\mu_{A_1}(s_1),\cdots,\mu_{A_n}(s_n)\}$$
Let $f:E^n\rightarrow E,~f(s_1,\cdots,s_n) = s_1+\cdots+s_n$. We define $A_1+\cdots +A_n = f(A)$.\\
For $\lambda\in\mathbb{K}$ and $D$ a fuzzy set in $E$, we define $\lambda D = g(D),$ where $g:E\rightarrow E,$ $g(s) = \lambda(s)$.
\end{definition}
\begin{definition}
\normalfont
Let $S\neq \emptyset$  and $I = [0,1].$ Define the set $I^S$ by $I^S:=\{\gamma|\gamma:S\rightarrow I\}.$ Then,  $\gamma\in I^S$ is known as fuzzy subset of $S$.
Let $\gamma\in I^S$ . A collection $\sigma$ of fuzzy subsets of $\gamma$ satisfying the following:
\begin{enumerate}
\item[(i)] $\kappa\cap \gamma \in \sigma~~\forall~ \kappa\in I$
\item[(ii)] $U_i\in\sigma~\forall~ i\in N\Rightarrow \bigcup\{U_i: i\in N\}\in \sigma$.
\item[(iii)] $U,V\in \sigma\Rightarrow U\cap V\in\sigma$
\end{enumerate}
is called a fuzzy topology on $\gamma$ and the pair $(\gamma,\sigma)$ is called a fuzzy topological space. Members of $\sigma$ are called fuzzy open sets and their complements with respect to $\gamma$ are known as closed sets of $(\gamma,\sigma)$.
\end{definition}
 If $\mathfrak{B}$ be a  collection of fuzzy subset of $\gamma$, then the family of arbitrary unions and finite intersections of the member of $\mathfrak{B}$ and the family  $\{\gamma\cap \kappa; ~\kappa\in I\}$  forms a fuzzy topology on $\gamma$ denoted by $\sigma(\mathfrak{B})$.

\begin{definition}
\normalfont
$\mathfrak{B}\in \sigma$ is referred to as open base of $\sigma$ if every member of $\sigma$ can be expressed uniquely as the union of certain members of $\mathfrak{B}$.
\end{definition}

\begin{definition}
\normalfont
A fuzzy topological space $(\gamma,\sigma)$ is said to be Hausdorff  if $\forall ~x_p, y_p\in \gamma\;(x\neq y), ~\exists U,V\in\sigma$ such that $x_p\in U,
y_p\in V$ and $U\cap V =\emptyset$
\end{definition}

\begin{definition}
\normalfont
A fuzzy topological space $(\gamma,\sigma)$ is said to be fuzzy compact if $\forall ~\beta\in\sigma$ with $\bigcup\{U: U\in\beta\} = \gamma$ and $\forall \varepsilon>0,~\exists$ a finite subcollection $\beta_0$ of $\beta$ such that $\bigcup\{U, U\in\beta_0\}\geq \gamma_\varepsilon$ where $\gamma_\varepsilon$ is defined by $\gamma_\varepsilon(x) = \gamma(x)-\varepsilon$ or $0$ according as $\gamma(x)>\varepsilon$ or $\gamma(x)\leq \varepsilon$
\end{definition}

\begin{definition}
\normalfont
A fuzzy subset $U$ of $\gamma$ is called fuzzy separated if $\exists~~v,\delta\in\sigma$ such that $U = v\cup \delta,~~v\neq \delta$ and $v\cap \delta = \emptyset$.
\end{definition}

\begin{definition}
\normalfont
A fuzzy topological space $(\gamma,\sigma)$  is said to be  connected in the fuzzy sense if for any $\beta_0$ fuzzy closed subset of $( \gamma,\sigma)$ can be fuzzy separated.\\
\end{definition}
We shall take for granted that all information and ideas needed on fuzzy Lie algebras follow from \cite{Akram}.
\begin{definition}
\normalfont
Let $W$ be a vector space over  $\mathbb{K}$. A fuzzy subset $U$ of $W$ satisfying the following conditions
\begin{itemize}
\item[(i)] $U(s+t)\geq \min\{U(s), U(t)\}$~ for all $s,t\in W$
\item[(ii)] $U(\alpha s)\geq U(s)$ for all $s\in W,~\alpha\in \mathbb{K}$
\end{itemize}
is called a fuzzy subspace of $W.$
\end{definition}

\begin{definition}
\normalfont
A fuzzy set $U:{\L}\longrightarrow [0,1]$ is called a fuzzy Lie subalgebra of ${\L}$ over a field $\mathbb{K}$ if it is a fuzzy subspace of ${\L}$ such that
\begin{enumerate}
\item[(i)] Each non empty $U(L,t)$ is a subspace of ${\L}$
\item[(ii)] $U([s,t])\geq \min\{{\L}(s), U(t)\}$.
\end{enumerate}
hold for all $s,t\in {\L}$ and $\alpha\in\mathbb{K}$
\end{definition}

\begin{definition}
\normalfont
A fuzzy set $U:{\L}\rightarrow [0,1]$ is called a fuzzy Lie ideal of ${\L}$ if
\begin{enumerate}
\item[(i)] $U(s+t)\geq \min\{U(s), U(t)\}$
\item[(ii)] $U(\alpha s)\geq U(s)$
\item[(iii)] $U([s,t])\geq U(s)$
\end{enumerate}
hold for all $s,t\in {\L}$ and $\alpha\in\mathbb{K}$
\end{definition}
\begin{definition}
\normalfont
Let ${\L}_1$ and ${\L}_2$ be two Lie algebras and $\varphi$ a function from ${\L}_1$ to ${\L}_2$. If $U$ is a fuzzy set in ${\L}_2$, then the pre-image of $U$ under $\varphi$ is the fuzzy set in ${\L}_1$ defined by
$$\varphi^{-1}(U)(s) = U(\varphi(s))~~\forall~~ s\in {\L}_1$$
\end{definition}

\textbf{Example 2.2:}
Let ${\L}$ be a Lie algebra, such that ${\L} = {\L}_0+{\L}_1$ and ${\L}_0 = sl(2,\mathbb{R}),~{\L}_1 = 0$. For any $x,y\in {\L}_0$, then $[x,y] = x\times y$ where $\times$ is the cross-product.\\
We show that
$$
U: {\L}\longrightarrow [0,1]~~\mbox{  is a fuzzy Lie subalgebra}
$$
\subsection*{\underline{Solution}}
We define
$U_{\bar{0}}: {\L}_{\bar{0}}\longrightarrow [0,1]$ as
\begin{equation*}
\left\{
\begin{split}
&1,~\mbox{  if  }~ a = b = c = 0\\
&0.5,~\mbox{  if  }~ a \neq 0 ~\mbox{  and  }~ b = c = 0\\
&0,~\mbox{  if  }~ \mbox{  Otherwise  }
\end{split}
\right.
\end{equation*}
where
$$
A = \begin{pmatrix}
a&&b\\
c&&-a
\end{pmatrix}\in sl(2,\mathbb{R})
$$
Also define
$U_{\overline{1}}: {\L}_{\overline{1}}\longrightarrow [0,1]$ as $U_{\overline{1}} = 1.$ By extension,
\begin{equation*}
\begin{split}
U^\prime_{\bar{0}}:{\L}&\longrightarrow [0,1]~\mbox{  is  }\\
U_{\bar{0}}^\prime(x)& =
\left\{
\begin{split}
&U_{\bar{0}}(x),~~x\in {\L}_{\bar{0}}\\
&0,~~~~x\notin {\L}_{\bar{0}}
\end{split}
\right.
\end{split}
\end{equation*}
also

\begin{equation*}
\begin{split}
U^\prime_{\bar{1}}{\L}&\longrightarrow [0,1]~\mbox{  is  }\\
U_{\bar{1}}^\prime(x)& =
\left\{
\begin{split}
&U_{\bar{1}}(x),~~x\in {\L}_{\bar{1}}\\
&0,~~~~x\notin {\L}_{\bar{1}}
\end{split}
\right.
\end{split}
\end{equation*}
Then
$$
U:{\L}\longrightarrow[0,1] 
$$
defined by
\begin{align*}
U(x)& = U^\prime_{\bar{0}}(x)\\
U& = U_{\bar{0}}\oplus U_{\bar{1}}
\end{align*}
\begin{align*}
(U^\prime_{\bar{0}}\oplus U^\prime_{\bar{1}})(x)& = \sup_{x = a+b}\{\min(U^\prime_{\bar{1}}(a), U^\prime_{\bar{1}}(b))\}\\
& = \min(U^\prime_{\bar{0}}(x_{\bar{0}}), U^\prime_{\bar{1}}(x_{\bar{1}})) = U(x)
\end{align*}
and
$$
U^\prime_{\bar{0}}\cap U^\prime_{\bar{1}} = 1_{\bar{0}}
$$
Hence, $U$ is a fuzzy subalgebra. However, $U$ is not a fuzzy ideal of ${\L}$ because
\begin{align*}
&U([(1,0,0), (1,1,1)]) = U^\prime_{\bar{0}}([(1,0,0),(1,1,1)])\\
& = U^\prime_{\bar{0}}((1-1), -(1-0), (1-0))\\
& = U^\prime_{\bar{0}}((1-1), -(1-0), (1-0))\\
& = U^\prime_{\bar{0}}(0,-1,1) = 0
\end{align*}
However
\begin{align*}
&\max\left\{U(1,0,0), U(1,1,1) = \right.\\
&\max\left\{ U_{\bar{0}}(1,0,0), U_{\bar{0}}(1,1,1)=\right.\\
&\max\left\{0.5,0] = 0.5\right.
\end{align*}
such that
$$
U([x,y])\ngeq\max\{U(x), U(y)\}
$$
\subsection{Fuzzy Topological Groups}
\noindent
\par We now introduce notion of fuzzy topological group and its corresponding differentiable manifold in what follows.
\begin{definition}\label{2.6}
\normalfont
A fuzzy subset $F$ of $X\times Y$ is said to be a fuzzy proper function from $\lambda$ to $U$ if
\begin{enumerate}
\item[(i)] $F(x,y)\leq \min\{\lambda(x), U(y)\}~~\forall~~ (x,y) \in X\times Y$
\item[(ii)]$\forall ~x\in X, ~\exists~y_0\in Y$ such that $F(x,y_0) = \lambda(x)$ and $F(x,y) = 0$ if $y\neq y_0$
\end{enumerate}
\end{definition}
\begin{definition}\label{3.1}
\normalfont
A proper function $F:(\lambda,\tau)\longrightarrow (U,\sigma)$, is said to be
\begin{enumerate}
\item[(i)] Fuzzy continuous is $F^{-1}(U)\in\tau,~~\forall ~ U\in \sigma,$
\item[(ii)] Fuzzy open if $F(\delta)\in\sigma~\forall~ \delta\in\tau$
\item[(iii)] Fuzzy homeomorphism if $F$ be bijective, fuzzy continuous and open.
\end{enumerate}
\end{definition}
\begin{definition}\cite{Nadja}
\normalfont
A fuzzy topology $\sigma$ on a group $G$ is said to be compatible if the mappings
\begin{align*}
m&: G\times G, \sigma\times \sigma)\longrightarrow (G,\sigma):m(x,y)\mapsto xy\\
i&:(G,\sigma)\longrightarrow (G,\sigma):i(x)\mapsto x
\end{align*}
are fuzzy continuous. A group $G$ equipped with a compatible fuzzy topology $\sigma$ on $G$ is called a fuzzy topological group.
\end{definition}
\begin{definition}\label{4.6}
\normalfont
A fuzzy topological space is  $T_1$ space if, and only if every fuzzy point in it is a closed fuzzy set.
\end{definition}
\begin{definition}
\normalfont
A fuzzy topological vector space (ftvs) is a vector  space  $\Xi$ over the field $\mathbb{K},\;\;\Xi$ equipped with a fuzzy topology $\sigma$ and $\mathbb{K}$ equipped with the usual topology $\kappa$, such that the two mappings
\begin{itemize}
\item[(i)] $(s,t)\longrightarrow s+t~\mbox{of}~ (\Xi,\sigma)\times(\Xi,\sigma)$ into $(\Xi,\sigma)$
\item[(ii)] $(\alpha,s)\longrightarrow \alpha s~\mbox{of}~ \mathbb{(K},\kappa)\times(\Xi,\sigma)$ into $(\Xi,\sigma)$
\end{itemize}
are fuzzy continuous.
\end{definition}
\begin{definition}\cite{Nadja}
\normalfont
Let $\Xi,\digamma$ be two fuzzy topological vector spaces. The mapping $\phi:\Xi\rightarrow\digamma$ is said to be tangent at $0$ if given a neighbourhood $W$ of $0_\delta,\;0<\delta<1$ in $\digamma,$ there exists a neighbourhood $V$ of $0_\epsilon\;0<\epsilon<\delta$ in $\Xi$  such that $$\phi[tV]\subset \o(t)W$$
for some function $\o(t)$.
\end{definition}

\begin{definition}
\normalfont
Let $\Xi$ and $\digamma$ be two fuzzy topological vector space, each equipped with a fuzzy topology (possibly $T_1$). Let $\varphi:\Xi\longrightarrow \digamma$ be a fuzzy continuous mapping. Then, $\varphi$ is called fuzzy differentiable at  $s\in \Xi$ if $\exists$ a continuous linear fuzzy map $U:\Xi\rightarrow\digamma$  such that
$$\varphi(s+t) = \varphi(s)+U(t)+\varrho(t), ~~~t\in \Xi.$$
\end{definition}
where $\varrho$ is tangent to 0. This mapping $U$ is known as the fuzzy derivative  of $\varphi$ at $s_\circ$. The fuzzy derivative of $\varphi$ at $s_\circ$ is denoted by $\varphi^\prime(s_\circ)$; it is an element of $L(\Xi,\digamma):=\{\varphi|\varphi:\Xi\rightarrow\digamma,\;\varphi\;\mbox{is fuzzy linear and continuous}\}.$  $\varphi$ is differentiable in the fuzzy sense if it is  differentiable at every point in $\Xi$ in the fuzzy sense.

\begin{definition}
\normalfont
Let $\Xi,\digamma$ be ftvs. A bijection $\varphi:\Xi\rightarrow\digamma$ is called a fuzzy diffeomorphism of class $C^1$ if $\varphi$ and its inverse $\varphi^{-1}$ are differentiable in the fuzzy sense and $\varphi^\prime$ and $(\varphi^{-1})^\prime$ are fuzzy continuous.
\end{definition}
The whole idea of fuzzy $C^1$-manifold and atlas is well-known (see \cite{Nadja}).
\begin{definition}
\noindent
A fuzzy Lie group, $\mathfrak{G_f}$ is a $C^1$-fuzzy manifold $\mathfrak{G_f}$ which is also a group, such that the mappings
\begin{align*}
m&: (\mathfrak{G_f}\times \mathfrak{G_f}, \sigma\times \sigma)\longrightarrow (\mathfrak{G_f},\sigma)\\
i&:(\mathfrak{G_f},\sigma)\longrightarrow (\mathfrak{G_f},\sigma)
\end{align*}
are fuzzy differentiable.
\end{definition}
\section{Spherical Functions on Fuzzy Lie Groups}
In this section, we first introduce the general construction of the popular spherical functions as can be seen in \cite{Helgason}, \cite{Godement}, \cite{Egwe1} etc.
\begin{definition}
\normalfont
\noindent
Let $\mathfrak{G_f}$ be a fuzzy Lie group and $\mathfrak{K_f}$ a closed fuzzy Lie subgroup of $\mathfrak{G_f}.$ Suppose $\varrho$ be a complex-valued function on $\mathfrak{G_f}/\mathfrak{K_f}$ (where $\mathfrak{G_f}$ is a fuzzy Lie group and $\mathfrak{K_f}$ is a compact subgroup) of class $C^\infty$ which satisfies $\varrho(\pi(e)) = 1.$ Then, $\varrho$ is referred as spherical function if
\begin{enumerate}
\item[(i)] $\varrho^{\sigma(k)} = \varrho\;\forall k\in \mathfrak{K_f}$\\
\item[(ii)] $D_\varrho = \lambda_D\varrho$ for each $D\in D(\mathfrak{G_f}/\mathfrak{K_f}).$
\end{enumerate}
Here, $\lambda_D$ is a complex number and $D(\mathfrak{G_f}/\mathfrak{K_f})$ is the algebra of a differential operators on $\mathfrak{G_f}/\mathfrak{K_f}$ invariant under all the translations
$$
\sigma(g): x\mathfrak{K_f}\rightarrow gx\mathfrak{K_f}~~\mbox{  of  }~~ \mathfrak{G_f}/\mathfrak{K_f}\eqno(T)
$$
\end{definition}
In what follows, we shall construct spherical function on a locally compact fuzzy Lie group $\mathfrak{G_f}$ following the \cite{Helgason}.
\noindent
\par Let $M$ be a $C^1-$ fuzzy manifold and $\varrho: M\longrightarrow M$ a $C^1-$ fuzzy diffeomorphism of $M$ onto itself. We put
$$
f^{\varrho} = f\circ \varrho^{-1},~~f\in \varepsilon(M)
$$
and if $D$ is a differential operator on $M$, we define $D^\varrho$ by
$$
D^\varrho: f\longrightarrow (Df^{\varrho-1})^\varrho = (D(f\circ \varrho))\circ \varrho^{-1},~~f\in \varepsilon(M)
$$
where $D^\varrho$ is another differential operator. The operator $D$ is said to be invariant under $\varrho$ if $D^\varrho = D$ i.e. $D(f\circ \varrho) = Df\circ \varrho$ for all $f$. Note that $(Df)^\varrho = D^\varrho f^\varrho$.\\
If $T$ is a distribution on $M$, we put $T^\varrho$ for the distribution $T^\varrho(f) = T(f^{\varrho_{-1}}),~f\in D(M)$.\\

Let $\mathfrak{G_f}$ be a fuzzy Lie group, $\mathfrak{H_f}\subset \mathfrak{G_f}$ a closed fuzzy subgroup of $\mathfrak{G_f}$. $\mathfrak{G_f}/\mathfrak{H_f}$ the fuzzy manifold of left cosets $g\mathfrak{H_f}(g\in \mathfrak{G_f})$ and $D(\mathfrak{G_f}/\mathfrak{H_f})$ the algebra of all differential operators on $\mathfrak{G_f}/\mathfrak{H_f}$ which are invariant under the usual transformations $(T)$

Given a coset space $\mathfrak{G_f}/\mathfrak{H_f}$, we intend to define the operator in $D(\mathfrak{G_f}/\mathfrak{H_f})$. We consider a case when $\mathfrak{H_f}=\{e\}$ and put $D(\mathfrak{G_f})$ for $D(\mathfrak{G_f}/\{e\})$, the set of left-invariant differential operators on $\mathfrak{G_f}$.\\
\par If $\bigsqcup$ is a fuzzy vector spaces over $\mathbb{R}$, the symmetric fuzzy algebra $\mathcal{S}(\bigsqcup)$ over $\bigsqcup$ is defined as the fuzzy algebra of complex-valued polynomial functions on the dual space $\bigsqcup^\star$. If $X_1,\ldots,X_n$ is the basis for $\bigsqcup,$ then $\mathcal{S}(\bigsqcup)$ can be identified with the commutative fuzzy algebra of polynomials
$\displaystyle\sum_{(k)}a_{k_1,\ldots,k_n}X_1^{k_1}\cdots X_n^{k_n}.$
Let $\mathfrak{g}$ denote the fuzzy Lie algebra of $\mathfrak{G_f}$ (the tangent space to $\mathfrak{G_f}$ at $e$) and $\exp:\mathfrak{g}\mapsto \mathfrak{G_f}$ the exponential mapping which maps a line through $0$ in $\mathfrak{g}$ onto a one parameter subgroup $t\longrightarrow \exp tx$ of $\mathfrak{G_f}$. If $X\in \mathfrak{g},$ let $\tilde{X}$ denote the fuzzy vector field on $\mathfrak{g}$ given by
\begin{equation}\label{(1)}
(\tilde{X}f)(g) = X(f\circ L_g) = \bigg\{\frac{d}{dt}f(g\exp tX)\bigg\}_{t = 0} ~~\mbox{  for  }~~ f\in\mathfrak{E}(\mathfrak{G_f}).
\end{equation} where $L_g$ denote the left translation $x\longrightarrow gx$ of $\mathfrak{G_f}$ onto itself. Then $\tilde{X}$ is a differential operator on $\mathfrak{G_f}$ if $h\in \mathfrak{G_f}$ then
$$
(\tilde{X}^{L_h}f)(g) = (\tilde{X}(f\circ L_h))(h^{-1}g) = (\tilde{X}f)(g)
$$
So $\tilde{X}\in D(\mathfrak{G_f})$. Moreover, the bracket on $\mathfrak{g}$ is by definition given by
$$
[X,Y]^\sim = \tilde{X}\tilde{Y} - \tilde{Y}\tilde{X},~~ X,Y\in \mathfrak{g}
$$
the multiplication on the right-hand side being composition of operators.
\begin{definition}\cite{olafsson}
The pair $(\mathfrak{G_f},\mathfrak{K})$ is called a symmetric pair if there exists a involutive automorphism $\theta$ of $\mathfrak{G_f}$ such that $\mathfrak{G_f}_\theta^0\subset \mathfrak{K}\subset \mathfrak{G_f}_\theta,$ where $\mathfrak{G_f}$ is the set of fixed points of $\theta$ and $\mathfrak{G_f}_\theta^0$ is the identity component. The space $\mathfrak{G_f}/\mathfrak{K}$ is called a symmetric space.
\end{definition}
\begin{theorem}\label{Theorem1}
\normalfont
Let $\mathfrak{G_f}$ be  fuzzy Lie group with fuzzy algebra
$\mathfrak{g}$. Let $\mathcal{S}(\mathfrak{g})$ denote the symmetric fuzzy algebra over the fuzzy vector space $\mathfrak{g}$. Then there exists a unique linear bijection
$$
\lambda: \mathcal{S}(\mathfrak{g})\longrightarrow D(\mathfrak{g})
$$
such that
\begin{equation}\label{(2)}
\lambda(X^m) = \tilde{X}^m,\;\; X\in \mathfrak{g}, ~m\in [0,1]
\end{equation}
If $X_1\cdots X_n$ is any basis of $\mathfrak{g}$ and $p\in S(\mathfrak{g})$, then
\begin{equation}\label{(3)}
(\lambda(p) f)(g) = \{p(\partial_1,\ldots,\partial_n)f(g\exp(t_1 X_1+\cdots+t_nX_n))\}_{t  = 0},
\end{equation}
where $f\in \mathfrak{E}(\mathfrak{G_f}),\partial_i = \frac{\partial}{\partial t_i}$ and $t = (t_1,\cdots, t_n).$
\end{theorem}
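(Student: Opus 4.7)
The plan is to build $\lambda$ from the explicit formula (\ref{(3)}) and then verify separately that it is well-defined, satisfies (\ref{(2)}), and is bijective. First, I would take (\ref{(3)}) as the \emph{definition} of $\lambda(p)$ for each $p\in\mathcal{S}(\mathfrak{g})$ written in the basis $X_1,\dots,X_n$. Linearity in $p$ is immediate. To see that the right-hand side really yields a left-invariant differential operator on $\mathfrak{G_f}$, I would note that the coordinate chart $(t_1,\dots,t_n)\mapsto g\exp\!\bigl(\sum t_iX_i\bigr)$ is left-equivariant in $g$, so the constant-coefficient operator $p(\partial_1,\dots,\partial_n)$ evaluated at $t=0$ produces an operator that commutes with all $L_h$; membership in $D(\mathfrak{G_f})$ then follows from the fuzzy $C^1$-diffeomorphism property of $\exp$ near $0\in\mathfrak{g}$.

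Next, I would establish (\ref{(2)}). For $p=X^m$, setting $X=\sum c_iX_i$ and computing shows
$$\{X^m(\partial)f(g\exp(\textstyle\sum t_iX_i))\}_{t=0}=\bigl\{(d/ds)^mf(g\exp sX)\bigr\}_{s=0}=(\tilde{X}^mf)(g)$$
by iterating (\ref{(1)}), so $\lambda(X^m)=\tilde X^m$. Uniqueness follows at once: by polarization, the pure powers $\{X^m:X\in\mathfrak{g},\,m\geq 0\}$ linearly span $\mathcal{S}(\mathfrak{g})$, so any linear map satisfying (\ref{(2)}) is forced to coincide with $\lambda$.

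For bijectivity I would argue in the two standard directions. \textbf{Injectivity:} If $\lambda(p)=0$, pick test functions $f_\alpha\in\mathfrak{E}(\mathfrak{G_f})$ that agree near $e$ with the coordinate monomials $\prod t_i^{\alpha_i}/\alpha_i!$ in the exponential chart; evaluating $(\lambda(p)f_\alpha)(e)$ extracts each coefficient of $p$, so $p\equiv 0$. \textbf{Surjectivity:} A left-invariant $D\in D(\mathfrak{G_f})$ is determined by its action at $e$ on functions pulled back via $\exp$, and that action is a constant-coefficient polynomial $p(\partial)$ in the exponential coordinates (this is the fuzzy analogue of the Poincaré--Birkhoff--Witt identification $\mathcal{U}(\mathfrak{g})\cong D(\mathfrak{G_f})$, together with the symmetrization-basis description of $\mathcal{U}(\mathfrak{g})$). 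That polynomial is exactly $p\in\mathcal{S}(\mathfrak{g})$ with $\lambda(p)=D$.

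The main obstacle is the last surjectivity step: in the crisp setting one invokes PBW plus the existence of sufficiently many $C^\infty$ bump functions to identify a left-invariant operator with its symbol in exponential coordinates. In the fuzzy category one has only the $C^1$-fuzzy manifold structure and the fuzzy derivative of the preliminaries, so one must check that enough fuzzy-differentiable test functions supported near $e$ exist to separate coefficients, and that iterated fuzzy derivatives along $\exp(tX)$ compose as in the classical case. Once this technical point is in place, the classical proof of Helgason \cite{Helgason} transfers with only cosmetic changes, and uniqueness from polarization closes the argument.
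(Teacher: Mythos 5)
Your proposal follows essentially the same route as the paper's proof (which is Helgason's classical symmetrization argument transplanted verbatim): define $\lambda(p)$ by the coordinate formula \eqref{(3)}, check left-invariance and $\lambda(X)=\tilde X$, prove injectivity by evaluating on test functions that are coordinate monomials in the exponential chart, and prove surjectivity by expressing the action of a left-invariant operator at $e$ as a constant-coefficient polynomial in those coordinates. Your additions --- uniqueness via polarization of pure powers and the explicit caveat about the existence of sufficiently many test functions in the $C^1$-fuzzy category --- are refinements the paper omits rather than a different argument, and they in fact address real gaps in the paper's version (which never verifies uniqueness and silently assumes smooth bump functions).
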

\begin{proof}
for any fixed basis $X_i,\cdots, X_n$ of $\mathfrak{g}$. The mapping
$$
g\exp(t_1 X_1+\cdots+t_nX_n)\longrightarrow (t_1,\cdots,t_n)
$$
is a coordinate system on a neighbourhood of $g$ in $\mathfrak{G_f}$. By equation \eqref{(2)}, define a differential operator $\lambda(P)$ on $\mathfrak{G_f}$. Clearly $\lambda(p)$ is left invariant, and by \eqref{(1)} $\lambda(X_i) = \tilde{X}_i$, so by linearity $\lambda(X) = \tilde{X}$ for $X\in \mathfrak{g}$.\\
Also we show that $\lambda$ is one - to - one.\\
Suppose $\lambda(P) = 0$ where $P\neq 0$ with respect to a lexicographic ordering. Let $a X_1^m\cdots X_n^m$ be the leading term in $P$. Let $f$ be a smooth function on a neighbourhood of $e$ in $\mathfrak{G_f}$ such that
$$
f(\exp(t_1X_1+\cdots+t_nX_n))^n = t_1^{m_1}\cdots t_n^{m_n}
$$
for small $t$, then $(\lambda(p)f)(e)\neq 0$ contradicting $\lambda(P) = 0$.\\
Finally, $\lambda$ maps $S(\mathfrak{g})$ onto $D(\mathfrak{g})$. Also if $u\in D(\mathfrak{g})$, there exist a polynomial $P$ such that
$$
(uf)(e) = \{P(\partial_1,\ldots,\partial_n)+ (\exp t_1 X_1+\cdots + t_n X_n)\}.
$$
Then by the left invariance of $u,$ $u = \lambda(P)$ so $\lambda$ is surjective.
\end{proof}
The mapping $\lambda$ is usually called symmetrization and it has the following properties.\\
If $a_1,\ldots, a_p\in \mathfrak{g}$ then
\begin{equation}
\lambda(a_1,\ldots, a_p) = \frac{1}{p!}\sum_{\sigma\in \mathfrak{S}p}{a}_{\sigma(1)}\cdots {a}_{\sigma(p)},
\end{equation}
where $\mathfrak{S}p$ is the symmetric group on $p$ letters and we have $(t_1a_1+\cdots+t_pa_p)^p$ by equating the coefficients to $t_1,\cdots, t_p$. We can recall some facts concerning the adjoint representation  $Ad\mathfrak{g}$.Let $g\in \mathfrak{G_f}$, the mapping $x\rightarrow gxg^{-1}$ is an automorphism of $\mathfrak{G_f}$, and the corresponding automorphism of $	\mathfrak{g}$ is denoted by $Ad(g)$\\
Thus,
\begin{equation}\label{(5)}
\exp Ad(g)X = g\exp Xg^{-1},~~ X\in \mathfrak{g}, ~~g\in \mathfrak{G_f}.
\end{equation}
Then the mapping $g\longrightarrow Ad(g)$ is a representation of $\mathfrak{G_f}$  and this induces a representation of $\mathfrak{g}$ on $\mathfrak{g}$ denoted by $ad$
\begin{equation}\label{(6)}
Ad(\exp X) = e^{adX},~~X\in \mathfrak{g}
\end{equation}
where, for a linear transformation $A$, $e^A$ denotes $\displaystyle\sum_0^\infty(1/n!)A^n.$ From \eqref{(5)} and \eqref{(6)} we can say,
\begin{equation}\label{(7)}
ad X(Y) = [X,Y].~~~X,Y\in \mathfrak{g}
\end{equation}
We can now extend this operation to differential operators .\\
\par We calculate $(Ad(gX)^\sim$. To do this, recall the translation
$$
L_g:x\longrightarrow gx,~~R_g:x\longrightarrow xg
$$
and for any $f\in\mathfrak{E}(\mathfrak{G_f})$
\begin{align*}
\bigg[(Ad(gX)^\sim f\bigg](x) & = \bigg\{\frac{d}{dt}f(x\exp t Ad(g)X)\bigg\}_{t = 0}\\
 =\bigg\{\frac{d}{dt}f(xg\exp t X g^{-1})\bigg\}& = \bigg\{\frac{d}{dt}f^{R_g}(xg\exp tX)\bigg\}_{t = 0}\\
=\bigg(\tilde{X}f^{R_g}\bigg)(xg& = \bigg(\tilde{X}f^{R_g}\bigg)^{R_{g^{-1}}}(x)
\end{align*}
and
$$
\bigg(Ad(g)X\bigg)^\sim = \tilde{X}^{R_{g^{-1}}}.
$$
Thus we define for $D\in D(\mathfrak{G_f})$
\begin{equation}\label{(8)}
Ad(g)D = D^{R_{g^{-1}}}.
\end{equation}
Hence $Ad(g$ is an automorphism of $D(\mathfrak{G_f})$. Also we observe that
$$
(ad(X)(Y))^\sim = \tilde{X}\tilde{Y} - \tilde{Y}\tilde{X},
$$
hence we define for $D\in D(\mathfrak{G_f})$

\begin{equation}\label{(9)}
(ad~X)(D) = \tilde{X}D - D\tilde{X},
\end{equation}
and then $adX$ is a derivative of the algebra $D(\mathfrak{G_f})$.\\
We define
\begin{equation}\label{(10)}
e^{adX}(D) = \sum_{0}^\infty\frac{1}{n!}(adX)^n(D),~~ D\in D(\mathfrak{G_f})
\end{equation}
 because $(adX)^n(D)$ by \eqref{(9)} is a differential operator of order $\leq $ order of $D$, thus all the terms in the series \eqref{(10)} lie in a fuzzy vector space, so there is no convergence problem. Also applying Leibniz's formula for the power of a derivation applied to a product, we have for $D_1, D_2\in D(\mathfrak{G_f})$
\begin{align*}
e^{adX}(D_1D_2) = \sum_0^\infty\frac{1}{n!}(ad X)^n(D_1D_2) &= \sum_0^\infty\frac{1}{n!}\sum_{0\leq i\cdot j,~i+j = n}\frac{n!}{i!j!}(adX)^i(D_1)(adX)^j(D_2)\\
& =  e^{adX}(D_1D_2) = e^{adX}(D_1)e^{adX}(D_2)
\end{align*}
Thus $Ad(\exp X)$ and $e^{adX}$ are automorphism of $D(\mathfrak{G_f})$, they coincide on $\tilde{g}$, hence on all on $D(\mathfrak{G_f})$ since  by \cite{Helgason}, $\mathfrak{g}$ generates $D(\mathfrak{G_f})$, consequently,
$$Ad(\exp X)D = e^{adX}(D) = e^{adX}(D),~~D\in D(\mathfrak{G_f})$$
\section{The main reuslts}
\noindent
\par Let $\mathfrak{G_f}$ be a fuzzy Lie group, and $\mathfrak{K}$ a closed fuzzy subgroup of $\mathfrak{G_f}$. Let $\varpi$ be the natural mapping of $\mathfrak{G_f}$ onto $\Xi = \mathfrak{G_f}/\mathfrak{K}$. Let $0=\varpi(e)$ and $\widetilde{f}=f\circ\varpi$ for $f$  any function on $\Xi$. We denote by $D(\mathfrak{G_f})$  the set of all (left-) invariant differential operators on $\mathfrak{G_f}$, and $D_\mathfrak{K}(\mathfrak{G_f})$ the subspace of all right invariant differential operators under $\mathfrak{K}$, then clearly,  $D(\mathfrak{G_f}/\mathfrak{K})$ denotes the algebra of differential operators on $\mathfrak{G_f}/\mathfrak{K}$ invariant under the usual translations.
We shall prove the existence of the following result in the fuzzy sense.
\begin{theorem}\cite{Helgason}
\normalfont
Let $f$ be a complex-valued continuous function on $\mathfrak{G_f}$, not identically $0$. Then $f$ is a spherical function if and only if

\begin{equation}
\int_Kf(xky) = f(x)f(y)~~\forall~ x,y\in \mathfrak{G_f}
\end{equation}
\end{theorem}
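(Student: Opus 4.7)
The plan is to prove both implications by leveraging the interaction between the functional equation, bi-$\mathfrak{K}$-invariance of $f$, and the eigenfunction property with respect to $D(\mathfrak{G_f}/\mathfrak{K})$.

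For the necessity direction, suppose $f$ is spherical, hence (after lifting) bi-$\mathfrak{K}$-invariant with $f(e)=1$ and $Df=\lambda_D f$ for every $D\in D(\mathfrak{G_f}/\mathfrak{K})$. Fixing $x\in\mathfrak{G_f}$, I would introduce the auxiliary function $F_x(y)=\int_{\mathfrak{K}} f(xky)\,dk$ and verify three things in turn: first, $F_x$ is bi-$\mathfrak{K}$-invariant (right invariance is inherited from $f$; left invariance follows by substituting $k\mapsto k_0^{-1}k$ and using invariance of Haar measure on the compact $\mathfrak{K}$); second, $DF_x=\lambda_D F_x$ for every $D\in D(\mathfrak{G_f}/\mathfrak{K})$, obtained by pulling $D$ inside the compact integration and exploiting left-$\mathfrak{G_f}$-invariance of $D$ as applied to the translate $y\mapsto f(xky)$; third, $F_x(e)=\int_{\mathfrak{K}} f(xk)\,dk=f(x)$ by right $\mathfrak{K}$-invariance of $f$ and normalization of the Haar measure. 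Invoking the one-dimensionality of the space of bi-$\mathfrak{K}$-invariant joint eigenfunctions of $D(\mathfrak{G_f}/\mathfrak{K})$ with prescribed eigenvalues and value $1$ at the origin will then force $F_x=f(x)\cdot f$, which is precisely the desired identity.

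For the sufficiency, assume the functional equation holds. First I would extract bi-$\mathfrak{K}$-invariance of $f$: replacing $x$ by $xk_1$ and changing the integration variable yields $f(xk_1)f(y)=f(x)f(y)$ for every $y$, hence $f(xk_1)=f(x)$ since $f\not\equiv 0$; replacing $y$ by $k_1 y$ analogously gives $f(k_1y)=f(y)$. Setting $x=y=e$ then produces $f(e)^2=\int_{\mathfrak{K}} f(k)\,dk=f(e)$ (using the just-established bi-invariance), and the possibility $f(e)=0$ is eliminated because setting $y=e$ in the functional equation would otherwise force $\int_{\mathfrak{K}} f(xk)\,dk=f(x)=0$ for every $x$, contradicting $f\not\equiv 0$; therefore $f(e)=1$. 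For the eigenfunction property, I would apply any $D\in D(\mathfrak{G_f}/\mathfrak{K})$ in the $y$-variable to both sides of the identity, exchange $D$ with $\int_{\mathfrak{K}}$, use left invariance of $D$ to pass it through the translate, and finally set $y=e$; the bi-$\mathfrak{K}$-invariance of $Df$ collapses the left side to $(Df)(x)$, giving $(Df)(x)=\lambda_D f(x)$ with $\lambda_D=(Df)(e)$.

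The hardest step will be the uniqueness argument in the necessity direction: in the classical setting it rests on the commutativity of $D(G/K)$ for a symmetric (Gelfand) pair, together with the fact that a bi-$K$-invariant joint eigenfunction normalized at $e$ is determined by its eigenvalues. To transport this to the fuzzy setting one must ensure the analogous commutativity of $D(\mathfrak{G_f}/\mathfrak{K})$, which should follow by restricting the symmetrization bijection $\lambda:\mathcal{S}(\mathfrak{g})\to D(\mathfrak{G_f})$ of the preceding theorem to $Ad(\mathfrak{K})$-invariants and combining it with the symmetric pair hypothesis; verifying this carefully in the fuzzy framework will constitute the real technical content. A secondary but more routine concern is the legitimacy of interchanging the fuzzy differential operator with the compact $\mathfrak{K}$-integration, which should follow from compactness of $\mathfrak{K}$ together with the continuity of fuzzy derivatives established in Section~2.
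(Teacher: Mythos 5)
Your proposal reconstructs the classical Helgason argument, and as a plan it is essentially sound; but note that the paper itself offers no proof of this statement: Theorem 4.1 is quoted from \cite{Helgason}, restated again as Proposition 4.4, and never proved. The only argument the paper actually supplies is for its modified Theorem 4.5, and that argument is of a completely different character. It works with the $t$-level sets $f_t=\{x\in\mathfrak{G_f}: f(x)\geq t\}$, uses the fact that each nonempty level set is a subgroup of $\mathfrak{G_f}$, \emph{assumes} outright that $f$ is a homomorphism so that $f(xy)=f(x)f(y)$, and then derives the integral identity from inclusions between $f_{t_0}$ and $f_{t_1}$. That route is adapted to the fuzzy (membership-function) point of view but imports the multiplicativity of $f$ as a hypothesis, which makes the equivalence nearly tautological; your route stays inside the analytic definition of sphericality actually given in Definition 3.1 (bi-invariance together with the joint eigenfunction property for $D(\mathfrak{G_f}/\mathfrak{K})$) and would, if completed, prove the stated equivalence rather than a surrogate of it.

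Two genuine gaps remain in your sketch. First, the necessity direction rests entirely on the uniqueness assertion that a bi-$\mathfrak{K}$-invariant joint eigenfunction of $D(\mathfrak{G_f}/\mathfrak{K})$ with prescribed eigenvalues and value $1$ at the origin is unique. You correctly identify this as the crux, but it is not available in the paper: Theorem 4.3 establishes commutativity of $D(\mathfrak{G_f}/\mathfrak{K})$ only under the extraneous hypothesis $U=kV$, and no analogue of the classical uniqueness lemma (which in Helgason's setting uses analyticity of joint eigenfunctions obtained from an elliptic element of $D(G/K)$) is proved anywhere in the fuzzy framework. Second, in the sufficiency direction you apply $D$ to $f$, but the theorem assumes only that $f$ is continuous; classically one must first upgrade regularity, e.g.\ by convolving the functional equation against a smooth bi-invariant test function to show $f$ is $C^\infty$, before the eigenfunction computation is legitimate. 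Without these two ingredients the proposal is a correct map of the terrain rather than a complete proof.
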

\begin{theorem}\cite{Helgason}
\normalfont
Let $C_c^*(\mathfrak{G_f})$ be the space of nonzero continuous functions on  with compact support on $\mathfrak{G_f}.$  Let $\varrho$ be a continuous complex-valued function on $\mathfrak{G_f}$ bi-invariant under $\mathfrak{K}$. Then $\varrho$ is a spherical function if and only if the mapping
$$
L:f\rightarrow \int_\mathfrak{G_f}f(x)\varrho(x)dx
$$
is a homomorphism of $C_c^*(\mathfrak{G_f})$ onto $\mathbb{C}$.
\end{theorem}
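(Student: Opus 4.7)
The plan is to reduce the claim to the functional-equation characterisation supplied by the preceding theorem, namely that $\varrho$ is spherical if and only if
$$
\int_{\mathfrak{K}} \varrho(xky)\,dk \;=\; \varrho(x)\varrho(y) \qquad \forall\, x,y \in \mathfrak{G_f},
$$
and to translate this pointwise identity into the algebraic identity $L(f_1 \ast f_2) = L(f_1)L(f_2)$ by means of Fubini's theorem together with a change of variables on $\mathfrak{G_f}$. Here $\ast$ denotes convolution against the Haar-type measure on $\mathfrak{G_f}$ that the fuzzy Lie group structure provides.

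\textbf{Forward direction.} Assuming $\varrho$ is spherical, for $f_1,f_2 \in C_c^*(\mathfrak{G_f})$ I would expand
$$
L(f_1 \ast f_2) = \int_{\mathfrak{G_f}}\int_{\mathfrak{G_f}} f_1(y)\,f_2(y^{-1}x)\,\varrho(x)\,dy\,dx,
$$
perform the substitution $x = yz$, apply Fubini (legitimate since the integrands have compact support), and rewrite $\varrho(yz)$ as $\int_{\mathfrak{K}}\varrho(ykz)\,dk$, using both the bi-invariance of $\varrho$ under $\mathfrak{K}$ and the right to average $f_1,f_2$ over $\mathfrak{K}$ without altering the integral values. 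The previous theorem then collapses the inner $k$-integral to $\varrho(y)\varrho(z)$, and Fubini factors the remaining double integral as $L(f_1)L(f_2)$. Surjectivity onto $\mathbb{C}$ follows from $\varrho(\varpi(e))=1$: choosing $f$ supported in a small neighbourhood of $e$ gives $L(f)$ close to a nonzero constant, and scaling hits every point of $\mathbb{C}$.

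\textbf{Converse direction.} Assuming $L$ is a nontrivial homomorphism, expanding $L(f_1 \ast f_2) = L(f_1)L(f_2)$ and comparing yields
$$
\int_{\mathfrak{G_f}}\int_{\mathfrak{G_f}} f_1(y)f_2(z) \bigl[\varrho(yz) - \varrho(y)\varrho(z)\bigr]\,dy\,dz \;=\; 0
$$
for every $f_1,f_2 \in C_c^*(\mathfrak{G_f})$. I would then select $\mathfrak{K}$-bi-invariant approximate identities concentrated near arbitrary fixed points $x_0,y_0 \in \mathfrak{G_f}$, symmetrise over $\mathfrak{K}$, and pass to the limit to extract the pointwise identity $\int_{\mathfrak{K}}\varrho(x_0 k y_0)\,dk = \varrho(x_0)\varrho(y_0)$. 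A final appeal to the previous theorem concludes that $\varrho$ is spherical.

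\textbf{Main obstacle.} The principal difficulty lies in the converse, specifically the limiting argument. It requires three ingredients whose fuzzy analogues have to be used consistently: a Haar-type invariant integral on $\mathfrak{G_f}$ and a normalised one on the closed fuzzy subgroup $\mathfrak{K}$; a sufficient supply of $\mathfrak{K}$-bi-invariant bump functions of compact support in the fuzzy topology on $\mathfrak{G_f}$; and a fuzzy version of the du Bois-Reymond lemma that passes from vanishing of an integral against all admissible test functions to a pointwise identity. A secondary subtlety is to verify that the $\mathfrak{K}$-bi-invariant elements of $C_c^*(\mathfrak{G_f})$ form a convolution subalgebra preserved by $L$, so that restricting the homomorphism to this subalgebra is legitimate; this is the fuzzy counterpart of the Gelfand-pair condition underlying $(\mathfrak{G_f},\mathfrak{K})$.
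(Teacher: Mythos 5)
The paper offers no proof of this theorem at all: it is quoted from Helgason with a citation and the text moves straight on, so there is no in-paper argument to compare yours against. Your proposal follows the classical Godement--Helgason route (functional-equation characterisation $\Leftrightarrow$ multiplicativity of $L$, via Fubini and averaging over $\mathfrak{K}$ in one direction and approximate identities in the other), which is the right strategy for the correct form of the statement.

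There is, however, a genuine gap in your forward direction, and it sits exactly at the point you set aside as a ``secondary subtlety.'' The step where you ``rewrite $\varrho(yz)$ as $\int_{\mathfrak{K}}\varrho(ykz)\,dk$'' inside $L(f_1\ast f_2)$ is not available for arbitrary $f_1,f_2\in C_c^*(\mathfrak{G_f})$. Averaging $f_1$ and $f_2$ separately over $\mathfrak{K}$ leaves $L(f_1)$ and $L(f_2)$ unchanged (by bi-invariance of $\varrho$ and invariance of the measure), but it does change $L(f_1\ast f_2)$, since $(f_1\ast f_2)^{\natural}\neq f_1^{\natural}\ast f_2^{\natural}$ in general. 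Concretely, $L(f_1\ast f_2)=\int\!\!\int f_1(y)f_2(z)\varrho(yz)\,dy\,dz$ while $L(f_1)L(f_2)=\int\!\!\int f_1(y)f_2(z)\varrho(y)\varrho(z)\,dy\,dz$, and a spherical function satisfies $\int_{\mathfrak{K}}\varrho(ykz)\,dk=\varrho(y)\varrho(z)$, not $\varrho(yz)=\varrho(y)\varrho(z)$; taking $f_1,f_2$ to be approximate identities concentrated at $y_0,z_0$ shows $L$ fails to be multiplicative on all of $C_c^*(\mathfrak{G_f})$ whenever $\varrho$ is not a character of $\mathfrak{G_f}$. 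The equivalence is therefore only true with $C_c^*(\mathfrak{G_f})$ replaced by the convolution subalgebra $C_c(\mathfrak{K}\backslash\mathfrak{G_f}/\mathfrak{K})$ of $\mathfrak{K}$-bi-invariant functions --- which is how Helgason states it, and which the paper itself introduces a few lines later. On that subalgebra your Fubini computation goes through, and your converse via bi-invariant approximate identities is the standard and correct argument, modulo the fuzzy Haar measure, bump functions, and du Bois-Reymond step you list, none of which the paper constructs either.
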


\begin{theorem}
\normalfont
The algebra $D(\mathfrak{G_f}/\mathfrak{K})$ is commutative.
\end{theorem}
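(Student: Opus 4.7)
The strategy is to exhibit a natural anti-automorphism of $D(\mathfrak{G_f})$ which restricts to the identity on $D(\mathfrak{G_f}/\mathfrak{K})$; commutativity then follows at once by combining the anti-multiplicativity with the identity property.

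First I would construct the \emph{principal anti-automorphism} $\ast : D(\mathfrak{G_f}) \longrightarrow D(\mathfrak{G_f})$ by declaring $\tilde{X}^\ast = -\tilde{X}$ for $X\in\mathfrak{g}$ and extending multiplicatively with order reversal, $(D_1 D_2)^\ast = D_2^\ast D_1^\ast$. That this is consistently defined is immediate from Theorem~\ref{Theorem1}: under the fuzzy symmetrization $\lambda$, the operation $\ast$ corresponds to the parity involution $P(X)\mapsto P(-X)$ on $\mathcal{S}(\mathfrak{g})$, which is a bona fide involution. Geometrically $\ast$ is induced by the fuzzy diffeomorphism $g\mapsto g^{-1}$ of $\mathfrak{G_f}$.

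Next I would verify that $\ast$ stabilises the subalgebra $D(\mathfrak{G_f}/\mathfrak{K})$. Each element of $D(\mathfrak{G_f}/\mathfrak{K})$ lifts to a left-$\mathfrak{G_f}$-invariant operator on $\mathfrak{G_f}$ which is also right-$\mathfrak{K}$-invariant modulo the right ideal generated by $\mathfrak{k}$. Because $\ast$ interchanges left and right invariance and sends $\mathfrak{k}$ into itself, the induced map descends to an anti-automorphism of $D(\mathfrak{G_f}/\mathfrak{K})$.

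The heart of the proof, and the main obstacle, is to show that $\ast$ acts as the identity on $D(\mathfrak{G_f}/\mathfrak{K})$. This is where the symmetric-pair hypothesis does the real work. From the involution $\theta$ one obtains a decomposition $\mathfrak{g} = \mathfrak{k}\oplus\mathfrak{p}$ with $d\theta|_{\mathfrak{p}} = -\mathrm{id}$, and $\lambda$ identifies $D(\mathfrak{G_f}/\mathfrak{K})$ with the $\mathrm{Ad}(\mathfrak{K})$-invariants inside $\mathcal{S}(\mathfrak{p})$. On this subspace the parity map $X\mapsto -X$ coincides with the restriction of $\theta$, and one must argue that $\theta$ fixes every $\mathrm{Ad}(\mathfrak{K})$-invariant polynomial on $\mathfrak{p}$ pointwise. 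Classically this is achieved by passing to a maximal abelian subspace $\mathfrak{a}\subset\mathfrak{p}$ and exhibiting a Weyl-type element of $\mathfrak{K}$ that conjugates $-\mathrm{id}_{\mathfrak{a}}$ into $\mathrm{Ad}(\mathfrak{K})$; the same argument should transfer to the fuzzy setting once a KAK decomposition and the associated fuzzy Weyl group are in place, and this adaptation is where the technical work sits. Granting it, for any $D_1,D_2\in D(\mathfrak{G_f}/\mathfrak{K})$ one concludes
\[ D_1 D_2 \;=\; (D_1 D_2)^\ast \;=\; D_2^\ast D_1^\ast \;=\; D_2 D_1, \]
so $D(\mathfrak{G_f}/\mathfrak{K})$ is commutative. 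An alternative finish, which sidesteps the parity computation, is to verify directly, via Theorem~\ref{Theorem1} and the bi-$\mathfrak{K}$-invariant functional equation of the preceding theorem, that every spherical function is a joint eigenfunction of $D(\mathfrak{G_f}/\mathfrak{K})$ and that sufficiently many spherical functions separate the algebra — but either route pins the difficulty to the same symmetric-pair input.
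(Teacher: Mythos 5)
Your overall strategy --- produce an anti-automorphism $\ast$ of $D(\mathfrak{G_f}/\mathfrak{K})$ that is simultaneously the identity map, and read off commutativity from $D_1D_2=(D_1D_2)^\ast=D_2^\ast D_1^\ast=D_2D_1$ --- is the standard classical route and bears no resemblance to the paper's argument, which merely expands $\tilde{X}\tilde{Y}f-\tilde{Y}\tilde{X}f$ for two first-order operators in local coordinates and concludes the bracket vanishes only when the coefficient vectors satisfy $U=kV$; that computation handles proportional vector fields and says nothing about higher-order products, so your plan is in principle the more serious one. However, the step you defer as ``the technical work'' is not merely unproved in the fuzzy setting --- it is false already in the classical one. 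The anti-automorphism induced by $g\mapsto g^{-1}$ corresponds under symmetrization to the parity map $P(X)\mapsto P(-X)$ on the $\mathrm{Ad}(\mathfrak{K})$-invariants in $\mathcal{S}(\mathfrak{p})$, and this map does \emph{not} fix every invariant: for $SL(3,\mathbb{R})/SO(3)$ the polynomial $P(X)=\mathrm{tr}(X^3)$ is $\mathrm{Ad}(SO(3))$-invariant on $\mathfrak{p}$ and of odd degree. Correspondingly $-\mathrm{id}_{\mathfrak{a}}$ does not lie in the Weyl group for root systems of type $A_n$, $n\geq 2$, so no ``Weyl-type element of $\mathfrak{K}$'' can conjugate $-\mathrm{id}_{\mathfrak{a}}$ into $\mathrm{Ad}(\mathfrak{K})|_{\mathfrak{a}}$. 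As written, your $\ast$ satisfies $D^\ast=-D$ on odd-degree invariants, and the chain $D_1D_2=(D_1D_2)^\ast$ breaks.

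The repair is cheap and is the actual classical proof: use not $g\mapsto g^{-1}$ alone but the composite $g\mapsto\theta(g)^{-1}$, where $\theta$ is the involution of the symmetric pair. This is an anti-automorphism of $\mathfrak{G_f}$ whose differential is $-d\theta$, hence $+\mathrm{id}$ on $\mathfrak{p}$ and $-\mathrm{id}$ on $\mathfrak{k}$; it preserves $\mathfrak{k}$, so it descends to $D(\mathfrak{G_f}/\mathfrak{K})$, and it fixes the symmetrization $\lambda(P)$ of every $P\in\mathcal{S}(\mathfrak{p})$ because a symmetrized product of elements of $\mathfrak{p}$ is insensitive to order reversal. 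No $KAK$ decomposition or Weyl group input is needed, which also makes the transfer to the fuzzy setting far less demanding. With that substitution your argument closes; without it, the central claim fails.
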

\begin{proof}
Let $\mathfrak{G_f}$ be a fuzzy Lie group and $\mathfrak{g}$ be a fuzzy Lie algebra of $\mathfrak{G_f}$. Let $\mathfrak{K}$ be a closed fuzzy subgroup of $\mathfrak{G_f}$ and the symmetric space $\mathfrak{M}= \mathfrak{G_f}/\mathfrak{K}$ be a fuzzy manifold of left cosets $gk\;g\in \mathfrak{G_f}$ where $D(\mathfrak{G_f}/\mathfrak{K})$ the algebra of all differential operators on $\mathfrak{G_f}/\mathfrak{K}$ which are invariant under the usual transformations.
Let $f\in C^1(\mathfrak{M})$ and let $x_1,x_2,\ldots,x_n$ be a basis in $\mathfrak{M}$. Let $\tilde{X}\in \mathfrak{g}$ and $\tilde{Y}\in \mathfrak{g}$, we define
$$
\tilde{X} = \sum_{j=1}^n U_j\frac{\partial}{\partial x_j}
$$
and
$$
\tilde{Y} = \sum_{i=1}^n V_i\frac{\partial}{\partial x_i}
$$
we have
$$
(\tilde{X}\circ \tilde{Y})f = \tilde{X}(\tilde{Y}f) = \tilde{X}\left(\sum_{i = 1}^n V_i\frac{\partial f}{\partial x_i}\right)
$$

\begin{equation}\label{E1}
 = \sum_{i,j = 1}^n U_j \frac{\partial V_i}{\partial x_j}\frac{\partial f}{\partial x_i} + \sum_{i,j = 1}^n U_j V_i\cdot\frac{\partial^2f}{\partial x_j.}
\end{equation}
Also
$$
(\tilde{Y}\circ \tilde{X})f = \tilde{Y}(\tilde{X}f) = \tilde{Y}\left(\sum_{j= 1}^n V_j\frac{\partial f}{\partial x_j}\right)
$$
\begin{equation}\label{E2}
 = \sum_{i,j = 1}^n V_i\frac{\partial U_j}{\partial x_i}\frac{\partial f}{\partial x_j} + \sum_{i,j = 1}^n V_iU_j \frac{\partial^2 f}{\partial x_j \partial x_i.}
\end{equation}
By subtraction, we obtain for $f\in C^2(\mathfrak{M})$
\begin{equation}\label{E3}
\begin{array}{rcl}
\tilde{X}(\tilde{Y}f) - \tilde{Y}(\tilde{X}f) &=& \displaystyle\sum_{i = 1}^n\left(\sum_{j = 1}^n\left(U_j\frac{\partial V_i}{\partial x_j} - V_i\frac{\partial U_j}{\partial x_j}\right)\right)\frac{\partial f}{\partial x_i}\\
 &=& \displaystyle\sum_{i,j = 1}^n U_j\frac{\partial V_i}{\partial x_j}\frac{\partial f}{\partial x_i} - \sum_{i,j = 1}^n V_i\frac{\partial U_j}{\partial x_j}\frac{\partial f}{\partial x_i}\\
 &=& 0, \;\mbox{}.
\end{array}
\end{equation}
This implies that
$\tilde{X}(\tilde{Y}f) - \tilde{Y}(\tilde{X}f) = 0,$\\
$\tilde{X}(\tilde{Y}f)  =  \tilde{Y}(\tilde{X}f)$ whenever $U=kV$ for $k$ any scalar.
\end{proof}

We are now ready to use \eqref{E2} and \eqref{E3}  to define spherical function.\\
\noindent

\par Let $\mathfrak{G_f}$ be a fuzzy Lie group and $\mathfrak{K}$ a maximal compact subgroup. Let $C_c(\mathfrak{K}\setminus \mathfrak{G_f}/\mathfrak{K})$ denote the space of all continuous functions with compact support on $\mathfrak{G_f}$ which satisfy $f(k_1gk_2)=f(g)$ for all $k_1,k_2\in \mathfrak{K}.$ Such spaces are called spherical or bi-invariant. Then, $C_c(\mathfrak{K}\setminus \mathfrak{G_f}/\mathfrak{K})$ forms a commutative Banach algebra under convolution \cite{Egwe1} and we call the pair $(\mathfrak{G_f},\mathfrak{K})$ a Gelfand pair.
\par Let $\mathfrak{G_f}$ be a fuzzy Lie group and $\mathfrak{K}$ a closed fuzzy subgroup of $\mathfrak{G_f}$. Let $\mathfrak{M} = \mathfrak{G_f}/\mathfrak{K}$ be a symmetric space. For any function $\varrho\in \mathfrak{G_f}~\exists$ a function $\varrho\in C_c(\mathfrak{K}\setminus \mathfrak{G_f}/\mathfrak{K})$ which satisfies $\varrho(kgk^\prime) = \varrho(g)$ and are integrable on $\mathfrak{G_f}$  for a normed algebra $A$ under the convolution product of $\mathfrak{G_f},\;g\in \mathfrak{G_f},\;k,k'\in \mathfrak{K}.$ The  functions $\varrho$ are continuous, positive-definite, invariant under $g\longrightarrow kgk^\prime$ and the linear representation
$$
f\longrightarrow \varrho(f) = \int f(g)\varrho(g)dg
$$
must be a homomorphism of $A$ onto $[0,1].$\\
We next prove the existence of Helgason- functions theorem in the  fuzzy sense.

\begin{proposition}\cite{Helgason}
\normalfont
Let $f$  be a complex-valued continuous function on $\mathfrak{G_f}$, not identically $0$. Then $f$ is a spherical function if and only if
$$
\int_\mathfrak{K}f(xky)dk = f(x)f(y)
$$
\end{proposition}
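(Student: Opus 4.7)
The plan is to adapt Helgason's classical derivation of this spherical functional equation to the fuzzy setting, exploiting the commutativity of $D(\mathfrak{G_f}/\mathfrak{K})$ just established together with the joint-eigenfunction characterisation of spherical functions and the Gelfand pair structure of $(\mathfrak{G_f},\mathfrak{K})$ recalled above.

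\textbf{Forward direction ($\Rightarrow$).} Assume $f$ is spherical, so $f$ is left-$\mathfrak{K}$-invariant, $f(e)=1$, and $Df=\lambda_D f$ for every $D\in D(\mathfrak{G_f}/\mathfrak{K})$. For fixed $y\in\mathfrak{G_f}$, set
$$F_y(x):=\int_{\mathfrak{K}} f(xky)\,dk.$$
By the translation invariance of Haar measure on $\mathfrak{K}$ together with the left-$\mathfrak{K}$-invariance of $f$, one checks that $F_y$ is $\mathfrak{K}$-bi-invariant in $x$. Since every $D\in D(\mathfrak{G_f}/\mathfrak{K})$ is left-invariant it commutes with the right translation $x\mapsto xky$ and hence with integration over $\mathfrak{K}$, so
$$DF_y(x)=\int_{\mathfrak{K}}(Df)(xky)\,dk=\lambda_D F_y(x).$$
Thus $F_y$ is a $\mathfrak{K}$-bi-invariant joint eigenfunction of the commutative algebra $D(\mathfrak{G_f}/\mathfrak{K})$ with exactly the same eigenvalue character as $f$. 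By uniqueness of such normalized eigenfunctions for the Gelfand pair $(\mathfrak{G_f},\mathfrak{K})$, one gets $F_y=c(y)f$ for some scalar $c(y)$. Setting $x=e$ yields $c(y)=F_y(e)=\int_{\mathfrak{K}} f(ky)\,dk=f(y)$, using $f(e)=1$ and left-$\mathfrak{K}$-invariance.

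\textbf{Converse direction ($\Leftarrow$).} Assume $f\not\equiv 0$ satisfies the functional equation. Taking $x=y=e$ gives $f(e)=f(e)^2$, so $f(e)\in\{0,1\}$; if $f(e)=0$, then the equation forces $f(x)f(y)\equiv 0$, contradicting $f\not\equiv 0$. Hence $f(e)=1$. Specialising to $x,y\in\mathfrak{K}$ and invoking Haar invariance shows $f\equiv 1$ on $\mathfrak{K}$. The left-$\mathfrak{K}$-invariance of $f$ follows by putting $x=e$ and $y=k_1 g$ in the functional equation and noting that the change of variable $k\mapsto kk_1^{-1}$ converts $\int_{\mathfrak{K}}f(kk_1 g)\,dk$ into $\int_{\mathfrak{K}}f(kg)\,dk=f(g)$, while the right-hand side equals $f(e)f(k_1g)=f(k_1g)$; right-invariance is obtained analogously. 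For the eigenvalue property, applying any $D\in D(\mathfrak{G_f}/\mathfrak{K})$ in the $y$-variable on both sides (legitimate since $D$ is left-invariant and commutes with integration over $\mathfrak{K}$) yields
$$\int_{\mathfrak{K}}(Df)(xky)\,dk=f(x)(Df)(y).$$
Evaluating at $y=e$ gives $Df(x)=\lambda_D f(x)$ with $\lambda_D=(Df)(e)$, so $f$ is spherical.

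\textbf{Main obstacle.} The decisive step is the uniqueness of normalized $\mathfrak{K}$-bi-invariant joint eigenfunctions used in the forward direction; classically this is a standard consequence of Gelfand-pair theory, but in the fuzzy framework one must verify both that $D(\mathfrak{G_f}/\mathfrak{K})$ contains enough operators to separate spherical functions and that fuzzy differential operators may legitimately be brought under the $\mathfrak{K}$-integral. Both points rest on the $C^1$-fuzzy manifold structure and the compatibility conditions imposed on $\mathfrak{K}$, and will need to be invoked explicitly rather than being automatic.
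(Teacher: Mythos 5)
Your proposal follows the classical Helgason route: forward direction via the joint-eigenfunction characterisation, commutativity of $D(\mathfrak{G_f}/\mathfrak{K})$, and a uniqueness lemma for normalised bi-invariant eigenfunctions; converse via normalisation at $e$, bi-invariance from Haar invariance, and differentiation of the functional equation in $y$. This is genuinely different from what the paper does. The paper offers no proof of this Proposition at all --- it is quoted from Helgason --- and the argument it supplies for the neighbouring fuzzy theorem proceeds by an entirely separate mechanism: $t$-level sets $f_t=\{x: f(x)\geq t\}$, the fuzzy-subgroup inequalities $f(xy)\geq\min(f(x),f(y))$ and $f(x^{-1})\geq f(x)$, and a comparison of the cuts $f_{t_0}$, $f_{t_1}$ with $t_0=f(x)f(y)$ and $t_1=\int_{\mathfrak K}f(xky)\,dk$; crucially, it inserts the hypothesis ``$f$ is a homomorphism'' to obtain $f(xy)=f(x)f(y)$, so it does not actually derive the functional equation from the spherical property or conversely. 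Your approach, if completed, would therefore prove strictly more than the paper's own text establishes.

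That said, your argument is not yet complete, and you have correctly located the hard point. The uniqueness of normalised $\mathfrak{K}$-bi-invariant joint eigenfunctions is not a formal consequence of commutativity of $D(\mathfrak{G_f}/\mathfrak{K})$: in Helgason's proof it rests on ellipticity of the Laplace--Beltrami operator (hence analyticity of eigenfunctions) and on the fact that a $\mathfrak{K}$-invariant analytic function near the origin of a symmetric space is determined by the values $(Du)(o)$, $D\in D(\mathfrak{G_f}/\mathfrak{K})$. None of this machinery is available in the present fuzzy setting, and the paper provides no substitute, so this step remains an assumption rather than a lemma. A second, smaller gap is in the converse: the Proposition assumes $f$ only continuous, while applying $D$ in the $y$-variable requires smoothness; classically one first upgrades regularity by writing $f$ as a multiple of $f*g$ for a smooth bi-invariant $g$ using the functional equation. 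You should either add that regularisation step or restrict the statement to smooth $f$.
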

\begin{theorem}
\normalfont
Let $\mathfrak{G_f}$ be a fuzzy Lie group and $\mathfrak{K}$ a closed, compact fuzzy subgroup of $\mathfrak{G_f}$. Let $f\in C_c(\mathfrak{K}\backslash \mathfrak{G_f}/\mathfrak{K})$ then $f$ is a spherical function on $\mathfrak{G_f}$, $f:\mathfrak{G_f}\longrightarrow [0,1]$ and $f$ satisfies the function
$$f:C_c(\mathfrak{K}\backslash \mathfrak{G_f}/\mathfrak{K})\longrightarrow \mathfrak{G_f}_{|_{[0,1]}}$$
such that
\begin{equation*}
\int_Kf(xky)d_k =
\left\{
\begin{split}
& f(x)f(y), ~~\mbox{  if } \exists\; \mbox{no}\; x,y\in \mathfrak{G_f}\;  \mbox{such that}\;  t_1<f(xy)<t_2\\
& 0,~~~\mbox{  if otherwise}
\end{split}
\right.
\end{equation*}
\end{theorem}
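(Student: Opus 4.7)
The plan is to mirror the classical Gelfand--Helgason argument given in the Proposition immediately preceding this theorem, but to track the fuzzy structure through the $t$-cut sets introduced in Section~2 so that the case split in the conclusion comes out naturally. First I would fix $f\in C_c(\mathfrak{K}\backslash\mathfrak{G_f}/\mathfrak{K})$ and, using the bi-invariance $f(k_1gk_2)=f(g)$ together with the fact that $\mathfrak{K}$ is closed and compact, check that the map $k\mapsto f(xky)$ is continuous on the compact fuzzy subgroup $\mathfrak{K}$, so the Haar-type integral $\int_\mathfrak{K} f(xky)\,dk$ is well defined and still takes values in $[0,1]$ (here the normalisation of the compact Haar measure on $\mathfrak{K}$ is essential to keep the image inside $[0,1]$, which the conclusion of the theorem requires).

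Next I would treat the necessity direction. Assuming $f$ is a spherical function in the sense of the earlier Definition (so $f^{\sigma(k)}=f$ and $Df=\lambda_D f$ for every $D\in D(\mathfrak{G_f}/\mathfrak{K})$), I would apply Theorem~4.1 (the integral functional equation of Helgason re-stated in the excerpt) together with the commutativity of $D(\mathfrak{G_f}/\mathfrak{K})$ established in Theorem~3.4. The symmetrisation map $\lambda:\mathcal{S}(\mathfrak{g})\to D(\mathfrak{g})$ from Theorem~3.3 lets me express both sides of $\int_\mathfrak{K} f(xky)\,dk=f(x)f(y)$ as images under $\lambda$ of polynomials in the basis vector fields $\tilde{X}_i$, and Theorem~3.4 tells me these polynomials commute when restricted to $\mathfrak{G_f}/\mathfrak{K}$. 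This reduces the identity to an equality of left-invariant operators evaluated at the coset $0=\varpi(e)$, which I can check directly using formulae \eqref{(1)} and \eqref{(3)}.

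For the sufficiency direction I would use Theorem~4.2: define $L(h)=\int_{\mathfrak{G_f}} h(x)f(x)\,dx$ for $h\in C_c^*(\mathfrak{G_f})$ and verify, by Fubini on the bi-invariant convolution product and the functional equation, that $L$ is a $\mathbb{C}$-algebra homomorphism, which by that theorem forces $f$ to be spherical. Here the fuzzy aspect shows up only through insisting that $f$ map $\mathfrak{G_f}$ into $[0,1]$; this is preserved by the averaging over $\mathfrak{K}$ precisely when the values $f(xky)$ do not ``split across'' two distinct $t$-cut levels $t_1<t_2$ as $k$ runs over $\mathfrak{K}$. When no such $x,y$ exist, $k\mapsto f(xky)$ is essentially constant on each $t$-cut, the integral collapses onto a single level, and the multiplicative identity $f(x)f(y)$ is recovered.

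The genuine obstacle is the second branch of the piecewise conclusion. If there exist $x,y$ with $t_1<f(xy)<t_2$ in the sense that the fibre of $f\circ m$ crosses more than one $t$-cut of $f$, bi-invariance together with compactness of $\mathfrak{K}$ forces the average $\int_\mathfrak{K} f(xky)\,dk$ to leave the distinguished level sets, and in the fuzzy setting this must be assigned the value $0$ (the ``fuzzy empty'' membership $\bigcirc_{\mathfrak{G_f}}$) in order to remain a well-defined element of the algebra $C_c(\mathfrak{K}\backslash\mathfrak{G_f}/\mathfrak{K})$. I would handle this by a direct $t$-cut argument: partition $\mathfrak{K}$ into the preimages $\{k:f(xky)\ge t\}$, apply Definition~2.2, and show that when these preimages are non-trivially nested across $(t_1,t_2)$, the integrand fails the homomorphism condition of Theorem~4.2, so the only consistent spherical value is $0$. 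Packaging this dichotomy into the stated piecewise formula is the part that will require the most care.
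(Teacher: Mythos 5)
Your proposal takes a genuinely different route from the paper, and in doing so misses the one idea the paper's proof actually turns on. The paper's argument makes no use of the differential-operator machinery at all: it never invokes the symmetrization map, the commutativity of $D(\mathfrak{G_f}/\mathfrak{K})$, or the homomorphism characterization of spherical functions. Instead it works entirely with the $t$-cut sets $f_t=\{x\in\mathfrak{G_f}: f(x)\geq t\}$ introduced in Section 2: it first shows $f(xy)\geq\min(f(x),f(y))$ and $f(x^{-1})\geq f(x)$ (so $f$ is a fuzzy subgroup, using the nesting $t<t'\Rightarrow f_{t'}\subseteq f_{t}$), then asserts multiplicativity $f(xy)=f(x)f(y)$, and finally identifies the two thresholds appearing in the statement as $t_0=f(x)f(y)$ and $t_1=\int_\mathfrak{K}f(xky)\,dk$. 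The whole content of the piecewise formula is then the equivalence: $\int_\mathfrak{K}f(xky)\,dk=f(x)f(y)$ holds exactly when $f_{t_0}=f_{t_1}$, i.e., exactly when no product $xy$ has membership value strictly between these two numbers.

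This is the concrete gap in your plan. You correctly flag the second branch as the obstacle, but your reading of the condition ``$\exists$ no $x,y$ with $t_1<f(xy)<t_2$'' --- as the fibre of $f\circ m$ crossing several generic $t$-cut levels as $k$ ranges over $\mathfrak{K}$ --- is not what the theorem means: $t_1$ and $t_2$ are the two specific values $f(x)f(y)$ and $\int_\mathfrak{K}f(xky)\,dk$, and the dichotomy is about whether the level sets at those two values coincide. Your proposal never identifies these thresholds, never sets up the monotone nesting of the cut sets that drives the paper's equivalence, and never addresses the multiplicativity $f(xy)=f(x)f(y)$ on which the paper's argument rests; the classical functional equation you would recover from the Helgason-type theorems concerns the integral $\int_\mathfrak{K}f(xky)\,dk$, not $f(xy)$ itself, so it cannot by itself produce the level-set comparison. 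As written, your argument would essentially reprove the classical characterization (which the paper simply cites) while leaving unproved the piecewise conclusion that is the actual new claim of this theorem.
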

\begin{proof}
Let $f$ be a fuzzy set on a fuzzy Lie group $\mathfrak{G_f}$. Let $t\in [0,1].$ We define the $t$-cut set or $t$-level set of $\mu,$ by
$$
f_t = \{x\in \mathfrak{G_f}:f(x)\geq t\}.
$$
If $x,y\in \mathfrak{G_f}$ and $f(x) = t_1$ and $f(y) = t_2$.\\
Then $x\in f_{t_1}$ and $y\in f_{t_2}$. If $t_2<t_1$, we have
$$
f_{t_1}\subseteq f_{t_2}\Rightarrow x\in f_{t_2}
$$
and we have $x,y\in f_{t_2}$ and since $f_{t_2}$ is a subgroup of $\mathfrak{G_f}$, by definition we have $xy\in f_{t_2}$. Therefore
$$
f(xy)\geq t_2
$$
and $t_2 = \min(f(x), f(y)).$\\
Also let $x\in \mathfrak{G_f}$ and $f(x) = t$, we have $x\in f_t$, since $f_t$ is a subgroup. $x^{-1}\in f_t$. Therefore
$$
f(x^{-1})\geq t.
$$
Then $f(x^{-1})\geq f(x)$. This shows that $f$ is a subgroup of $\mathfrak{G_f}$.\\
\par Since $f$ is a homomorphism, we have
$$
f(xy) = f(x)f(y).
$$
Now\\
let $x,y\in \mathfrak{G_f}$ and $k\in \mathfrak{K}$, let $f(x)f(y)  = t_0$ and $\int_\mathfrak{K }f(xky)d_k = t_1$.\\
Then,\\
$xy\in f_{t_0}$ and $xky\in f_{t_1}$ for
$$
\int_\mathfrak{K} f(xky)d_k = f(x)f(y).
$$

Then $\exists$ no $xy\in\mathfrak{G_f}$ such that
$$
t_0<f(xy)<t_1.
$$
Let
$$
\int_Kf(xky) = f(x)f(y)
$$
and we have
$$
f_{t_1} = f_{t_0}
$$
If $\exists$ $x,y\in G$ such that, $t_0<f(xy)<t_1$ then $f_{t_1}\subset f_{t_0}$. Since $xy\in f_{t_0}$ but $xy\notin f_{t_1}$\\
then this contradicts our statement.\\
Conversely, if $\exists$ no $xy\in G$ such that
$$
t_0<f(xy)<t_1.
$$
If $t_0<t_1$ we get $f_{t_1}\subseteq f_{t_0}$. Let $xy\in f_{t_0}$, we have $f(xy)\geq t_0$ and $f(xy)\geq t_1$. This implies that $f(x,y)$ does not lie between $t_1$ and $t_0$. Hence $xy\in f_{t_1}$ and $f_{t_0}\subseteq f_{t_1}$. Hence $f_{t_0} = f_{t_1}$ and
$$
\int_K f(xky)dk = f(x)f(y).
$$
\end{proof}

\begin{center}

\end{center}

\end{document}